\newtheorem{corollary}{Corollary}[section]
\newtheorem{lemma}[corollary]{Lemma}
\newtheorem{remark}[corollary]{Remark}
\newtheorem{theorem}[corollary]{Theorem}
\newcommand{\mylabel}[1]{\label{#1}
            \ifx\undefined\stillediting
            \else \fbox{$#1$}\fi }
\newcommand{\BE}{\begin{equation}}
\newcommand{\EEQ}{\end{equation}}
\newcommand{\rfb}[1]{\mbox{\rm
   (\ref{#1})}\ifx\undefined\stillediting\else:\fbox{$#1$}\fi}
\newfont{\Blackboard}{msbm10 scaled 1200}
\newfont{\roma}{cmr10 scaled 1200}
\def\CC{\rm \hbox{C\kern-.56em\raise.4ex
         \hbox{$\scriptscriptstyle |$}\kern+0.5 em }}
\newcommand{\half}   {{\frac{1}{2}}}
\newcommand{\mm}    {{\hbox{\hskip 0.5pt}}}
\newcommand{\bluff} {{\hbox{\raise 15pt \hbox{\mm}}}}
\def\section{\@startsection {section}{1}{\z@}{-3.5ex plus -1ex minus
    -.2ex}{2.3ex plus .2ex}{\large\bf}}
\def\be{\begin{equation}}
\def\ee{\end{equation}}
\def\ds{\displaystyle}
\begin{document}
\thispagestyle{empty}
\title[Abstract coupled hyperbolic-parabolic system]{Note on stability of an abstract coupled hyperbolic-parabolic system: singular case}
\author{Ka\"{\i}s Ammari}
\address{LR Analysis and Control of PDEs, LR 22ES03, Department of Mathematics, Faculty of Sciences of Monastir, University of Monastir, Tunisia}
\email{kais.ammari@fsm.rnu.tn}

\author{Farhat Shel}
\address{LR Analysis and Control of PDEs, LR 22ES03, Department of Mathematics, Faculty of Sciences of Monastir, University of Monastir, Tunisia}
\email{farhat.shel@fsm.rnu.tn} 

\author{Zhuangyi Liu}
\address{Department of Mathematics and Statistics, University of Minnesota, Duluth, MN 55812-3000, United States} 
\email{zliu@d.umn.edu}

\begin{abstract} In this paper we try to complete the stability analysis for an abstract system of coupled hyperbolic and parabolic equations
\begin{equation*}
\left\{
\begin{array}{lll}
\ds u_{tt} + Au - A^\alpha w = 0, \\
w_t + A^\alpha u_t + A^\beta w = 0,\\
u(0) = u_0, u_t(0) = u_1, w(0) = w_0,
\end{array}
\right.
\end{equation*} 
where  $A$ is a self-adjoint, positive definite operator on a complex Hilbert space $H$, and $(\alpha, \beta) \in [0,1] \times [0,1]$, which is considered in \cite{Amk}, and after, in \cite{liu1}. Our contribution is to identify a fine scale of polynomial stability of the solution in the region $ S_3: = \left\{(\alpha,\beta) \in [0,1] \times [0,1]; \, \beta < 2\alpha -1 \right\}$ taking into account the presence of a singularity at zero.  
\end{abstract}

\subjclass[2010]{35B65, 35K90, 47D03}
\keywords{Hyperbolic-parabolic system, stability}

\maketitle

\tableofcontents

\section{Introduction} \label{secintro}
\setcounter{equation}{0}

In this paper, we study the stability of the following system\,:   
\be 
\label{eq1} 
u_{tt} + Au - A^\alpha w = 0, 
\ee
\be
\label{eq2} 
w_t + A^\alpha u_t + A^\beta w = 0, 
\ee 
\be 
\label{eq3}
u(0) = u_0, u_t(0) = u_1, w(0) = w_0, 
\ee 

where  $A$ is a self-adjoint, positive definite operator on a complex Hilbert space $H$, and $(\alpha, \beta) \in S_3 = \left\{(a,b) \in [0,1] \times [0,1]; \, b < 2a -1 \right\}$.

By denoting $U = (u,u_t,w)^T, U_0 = (u_0,u_1, w_0)^T$, system \rfb{eq1}-\rfb{eq3} can be written as an abstract linear evolution equation on the space $\mathcal{H} = \mathcal{D} (A^\half) \times H \times H$,

\be
\label{eq4}
\left\{
\begin{array}{ll}
\ds \frac{dU}{dt}(t) = \mathcal{A}_{\alpha,\beta} U(t), \, t  \geq 0, \\
U(0) = U_0,
\end{array}
\right.
\ee
where the operator $\mathcal{A}_{\alpha,\beta} : \mathcal{D}(\mathcal{A}_{\alpha,\beta}) \subset \mathcal{H} \rightarrow \mathcal{H}$ is defined by 

\begin{equation*}
\mathcal{A}_{\alpha,\beta} = \left(
\begin{array}{ccll}
0 & I & 0 \\
- A & 0 & A^\alpha \\
0 & - A^\alpha & - A^\beta
\end{array}
\right),
\end{equation*}
with the domain 
\begin{equation*}
\mathcal{D}(\mathcal{A}_{\alpha,\beta}) = \mathcal{D} (A) \times 
\mathcal{D} (A^{\alpha}) \times 
\mathcal{D} (A^{\alpha}),
\end{equation*}

Firstly, we give a detailed review about the well-posedness of the problem (\ref{eq4}). The operator $\mathcal{A}_{\alpha,\beta}$ is densely defined and dissipative, we will prove that its closure  generates a $C_0$-semigroup of contractions. Using a Lumer-Phillips  theorem
\cite{LuPh61}, it suffices to prove that the adjoint operator $\mathcal{A}^*_{\alpha,\beta}$  is also dissipative.

 The operator $\mathcal{A}^*_{\alpha,\beta}$ is a closed extension of the operator
\begin{equation*}
\mathcal{M}_{\alpha,\beta} = \left(
\begin{array}{ccll}
0 & -I & 0 \\
 A & 0 &- A^\alpha \\
0 &  A^\alpha & - A^\beta
\end{array}
\right),
\end{equation*} 
with domain 
\begin{equation*}
\mathcal{D}(\mathcal{M}_{\alpha,\beta}) =\mathcal{D}(\mathcal{A}_{\alpha,\beta})= \mathcal{D} (A) \times 
\mathcal{D} (A^{\alpha}) \times 
\mathcal{D} (A^{\alpha}).
\end{equation*}
The operator $\mathcal{M}_{\alpha,\beta}$ is densely defined and dissipative, then it is closable and its $\overline{\mathcal{M}}_{\alpha,\beta}$ is also dissipative. To conclude,  it suffices to prove that $\mathcal{A}^*_{\alpha,\beta}=\overline{\mathcal{M}}_{\alpha,\beta}$ . For this we use the following  lemma \cite{ ALMS94, Amk}.

\begin{lemma}\label{l2} \cite{ALMS94}
 We consider on the Hilbert spaces $G$ and $H_1$ the operators
\begin{eqnarray*}
\mathcal{A}:\mathcal{D}(\mathcal{A})\subset G\rightarrow G,\;\;\;\;\;B:\mathcal{D}(B)\subset H_1\rightarrow G,\\
B^*:\mathcal{D}(B^*)\subset G\rightarrow H_1,\;\;\;\;\;\mathcal{C}:\mathcal{D}(\mathcal{C})\subset H_1\rightarrow H_1.
\end{eqnarray*}
and we consider the operator matrix $\mathcal{M}$ on $G\times H_1$ defined by
\begin{equation*}
\mathcal{M}:=\left(
\begin{array}{ccll}
\mathcal{A} & B  \\
-B^* & \mathcal{C} 
\end{array}
\right),\;\;\;\mathcal{D}(\mathcal{M}):=\left(\mathcal{D}(\mathcal{A})\cap  \mathcal{D}(B^*)\right)\times \left(\mathcal{D}(B)\cap \mathcal{D}(\mathcal{C}) \right).  
\end{equation*}
Assume that $\mathcal{C}$ is boundedly invertible, that $\mathcal{B}\in \mathcal{L}\left( \mathcal{D}(\mathcal{C}),G\right) $ and that $\mathcal{C}^{-1}\mathcal{B}^*$ extends to a bounded linear operator (we denote its closure with the same symbol). Then $\mathcal{M}$ is closed if and only if $\mathcal{A}+\mathcal{B}\mathcal{C}^{-1}\mathcal{B}^*$ is closed and one has
\begin{eqnarray*}
\overline{\mathcal{M}}=\left(
\begin{array}{ccll}
I & \mathcal{B}\mathcal{C}^{-1}  \\
 0 & I 
\end{array}
\right)\left(
\begin{array}{ccll}
\overline{\mathcal{A}+\mathcal{B}\mathcal{C}^{-1}\mathcal{B}^*} & 0  \\
 0 & \mathcal{C} 
\end{array}
\right)\left(
\begin{array}{ccll}
I & 0  \\
 -\mathcal{C}^{-1}\mathcal{B}^* & I 
\end{array}
\right)\\
\mathcal{D}\left( \overline{\mathcal{M}}\right) =\left\lbrace \left(
\begin{array}{cll}
u  \\
 v 
\end{array}
\right) \in \mathcal{D}(\overline{\mathcal{A}+\mathcal{B}\mathcal{C}^{-1}\mathcal{B}^*})\times H_1, \;-\mathcal{C}^{-1}\mathcal{B}^*u+v\in \mathcal{D}(\mathcal{C}) \right\rbrace. 
\end{eqnarray*} 
\end{lemma}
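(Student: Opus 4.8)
The plan is to realise $\mathcal{M}$ as a product of a closed block‑diagonal operator with two bounded, boundedly invertible triangular factors, i.e.\ to make rigorous the formal Frobenius--Schur factorisation that already appears in the statement. Introduce on $G\times H_1$ the operators
\[
T_1 := \begin{pmatrix} I & \mathcal{B}\mathcal{C}^{-1} \\ 0 & I \end{pmatrix}, \qquad T_2 := \begin{pmatrix} I & 0 \\ -\mathcal{C}^{-1}\mathcal{B}^* & I \end{pmatrix}, \qquad D := \begin{pmatrix} \mathcal{A} + \mathcal{B}\mathcal{C}^{-1}\mathcal{B}^* & 0 \\ 0 & \mathcal{C} \end{pmatrix},
\]
where $\mathcal{C}^{-1}\mathcal{B}^*$ denotes the bounded closure from the hypotheses and $D$ carries its natural maximal domain. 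First I would record the elementary consequences of the assumptions: $\mathcal{C}$ boundedly invertible $\Rightarrow$ $\mathcal{C}$ is closed and $\mathcal{C}^{-1}\in\mathcal{L}(H_1,\mathcal{D}(\mathcal{C}))$ (graph norm); together with $\mathcal{B}\in\mathcal{L}(\mathcal{D}(\mathcal{C}),G)$ this gives $\mathcal{B}\mathcal{C}^{-1}\in\mathcal{L}(H_1,G)$; and $\mathcal{C}^{-1}\mathcal{B}^*$ is bounded by hypothesis. Hence $T_1,T_2\in\mathcal{L}(G\times H_1)$ are boundedly invertible, with inverses obtained by flipping the sign of the off‑diagonal entry. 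Moreover, since $\mathcal{D}(\mathcal{C})\subseteq\mathcal{D}(\mathcal{B})$ and $\mathcal{C}^{-1}$ is everywhere defined, $\mathcal{B}\mathcal{C}^{-1}\mathcal{B}^*=(\mathcal{B}\mathcal{C}^{-1})\circ\mathcal{B}^*$ has domain exactly $\mathcal{D}(\mathcal{B}^*)$, so $\mathcal{D}(\mathcal{A}+\mathcal{B}\mathcal{C}^{-1}\mathcal{B}^*)=\mathcal{D}(\mathcal{A})\cap\mathcal{D}(\mathcal{B}^*)$ and $\mathcal{D}(D)=\bigl(\mathcal{D}(\mathcal{A})\cap\mathcal{D}(\mathcal{B}^*)\bigr)\times\mathcal{D}(\mathcal{C})$.

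The first substantive step is the \emph{exact} operator identity $\mathcal{M}=T_1 D T_2$. For $(u,v)\in\mathcal{D}(\mathcal{M})$ one checks that $T_2(u,v)=(u,\,v-\mathcal{C}^{-1}\mathcal{B}^*u)$ lies in $\mathcal{D}(D)$ — here the bounded closure $\mathcal{C}^{-1}\mathcal{B}^*$ coincides with the genuine composition on $\mathcal{D}(\mathcal{B}^*)$, and $\mathcal{C}^{-1}\mathcal{B}^*u\in\mathcal{D}(\mathcal{C})$ — then applies $D$ and $T_1$ and collapses the telescoping contributions $\mathcal{B}\mathcal{C}^{-1}\mathcal{B}^*u$ and the relations $\mathcal{C}\mathcal{C}^{-1}=I$ on $H_1$, $\mathcal{C}^{-1}\mathcal{C}=I$ on $\mathcal{D}(\mathcal{C})$, recovering $(\mathcal{A}u+\mathcal{B}v,\,-\mathcal{B}^*u+\mathcal{C}v)=\mathcal{M}(u,v)$. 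Conversely, $(u,v)\in\mathcal{D}(T_1 D T_2)$ forces $u\in\mathcal{D}(\mathcal{A})\cap\mathcal{D}(\mathcal{B}^*)$ and $v-\mathcal{C}^{-1}\mathcal{B}^*u\in\mathcal{D}(\mathcal{C})$, whence $v\in\mathcal{D}(\mathcal{C})\subseteq\mathcal{D}(\mathcal{B})$, so $(u,v)\in\mathcal{D}(\mathcal{M})$. Thus $\mathcal{D}(T_1 D T_2)=\mathcal{D}(\mathcal{M})$ and the two operators agree.

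The remaining ingredient is a soft fact about closures: if $S$ is a (closable) operator and $T$ is bounded and boundedly invertible on the whole space, then $TS$ and $ST$ are closable with $\overline{TS}=T\overline{S}$ (same domain) and $\overline{ST}=\overline{S}\,T$ on $\{x:Tx\in\mathcal{D}(\overline{S})\}$; this follows at once from the graph description and continuity of $T,T^{-1}$. Applying it twice to $\mathcal{M}=T_1 D T_2$ yields $\overline{\mathcal{M}}=T_1\overline{D}\,T_2$ with domain $\{(u,v):T_2(u,v)\in\mathcal{D}(\overline{D})\}$; since $\mathcal{C}$ is already closed, $\overline{D}=\mathrm{diag}\bigl(\overline{\mathcal{A}+\mathcal{B}\mathcal{C}^{-1}\mathcal{B}^*},\,\mathcal{C}\bigr)$, and unwinding the domain condition reproduces exactly the asserted formula and the domain $\{(u,v)\in\mathcal{D}(\overline{\mathcal{A}+\mathcal{B}\mathcal{C}^{-1}\mathcal{B}^*})\times H_1:\ -\mathcal{C}^{-1}\mathcal{B}^*u+v\in\mathcal{D}(\mathcal{C})\}$. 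Finally, because $T_1,T_2$ are isomorphisms of $G\times H_1$, $\mathcal{M}=T_1 D T_2$ is closable (resp.\ closed) if and only if $D$ is, and $D=\mathrm{diag}(\mathcal{A}+\mathcal{B}\mathcal{C}^{-1}\mathcal{B}^*,\mathcal{C})$ is closable (resp.\ closed) if and only if $\mathcal{A}+\mathcal{B}\mathcal{C}^{-1}\mathcal{B}^*$ is, $\mathcal{C}$ being closed; this gives the equivalence and shows the factorisation formula holds whenever either side makes sense. The main obstacle is not a single deep step but the domain bookkeeping in $\mathcal{M}=T_1 D T_2$: one must verify carefully that the bounded closure $\mathcal{C}^{-1}\mathcal{B}^*$ may legitimately replace the unbounded composition wherever it occurs, and that passing to $\overline{D}$ via the two bounded isomorphisms neither enlarges nor shrinks the intended domain.
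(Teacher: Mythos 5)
The paper does not prove this lemma at all: it is imported verbatim from \cite{ALMS94} as a cited tool, so there is no in-paper proof to compare against. Your reconstruction via the Frobenius--Schur factorisation $\mathcal{M}=T_1DT_2$ with bounded, boundedly invertible triangular factors is precisely the mechanism behind the cited result, and the argument as you lay it out is correct: the identity $\mathcal{M}=T_1DT_2$ on the nose (using that $\mathcal{D}(\mathcal{C})\subseteq\mathcal{D}(\mathcal{B})$ and that the bounded extension of $\mathcal{C}^{-1}\mathcal{B}^*$ agrees with the genuine composition on $\mathcal{D}(\mathcal{B}^*)$), the fact that pre- and post-composition with bounded isomorphisms commutes with taking closures, and $\overline{D}=\mathrm{diag}\bigl(\overline{\mathcal{A}+\mathcal{B}\mathcal{C}^{-1}\mathcal{B}^*},\,\mathcal{C}\bigr)$ since $\mathcal{C}$ is closed. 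The one point worth making fully explicit is the convention that $\mathcal{B}\mathcal{C}^{-1}\mathcal{B}^*$ in $D$ carries domain $\mathcal{D}(\mathcal{B}^*)$ (genuine composition) rather than the possibly larger domain obtained by inserting the bounded closure of $\mathcal{C}^{-1}\mathcal{B}^*$ first; you flag this and use it consistently, and it is what makes both $\mathcal{D}(T_1DT_2)=\mathcal{D}(\mathcal{M})$ and the closedness equivalence come out exactly.
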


 By taking $G=\mathcal{D}(A^{1/2})$, $H_1=H\times H$, $\mathcal{A}=0$, $\mathcal{B}=(-I \; 0):\mathcal{D}(\mathcal{B})=\mathcal{D}(A^\alpha)\times \mathcal{D}(A^\alpha)\subset H\times H \rightarrow \mathcal{D}(A^{1/2})$,   $\mathcal{C}=\left(
\begin{array}{ccll}
0 &- A^\alpha \\
  A^\alpha & - A^\beta
\end{array}
\right)$, with $\mathcal{D}(\mathcal{C})=\mathcal{D}(A^\alpha)\times \mathcal{D}(A^\alpha)$, it appears that $\mathcal{B}^*=\left(
\begin{array}{cll}
-
A  \\
 0 
\end{array}
\right)$, $\mathcal{C}^{-1}\mathcal{B}^*=\left(
\begin{array}{cll}
A^{1+\beta-2\alpha}  \\

 A^{1-\alpha} 
\end{array}
\right)$,  $\mathcal{A}+\mathcal{B}\mathcal{C}^{-1}\mathcal{B}^*=-A^{\beta+1-2\alpha}$ (with domain $\mathcal{D}(A)$), and that $\mathcal{D}\left( \overline{\mathcal{A}+\mathcal{B}\mathcal{C}^{-1}\mathcal{B}^*}\right) =\mathcal{D}(A^{1/2})$. Furthermore, the operators $\mathcal{A}$, $\mathcal{B}$ and $\mathcal{C}$ satisfies the hypothesis in Lemma \ref{l2}, then
 $$\mathcal{D}\left( \overline{\mathcal{M}}_{\alpha,\beta}\right) =\left\lbrace (u,v,\theta)^T \in \mathcal{H},\; -A^{1+\beta-2\alpha}u+v\in \mathcal{D}(A^\alpha),\; -
  A^{1-\alpha}u+\theta\in \mathcal{D}(A^\alpha) \right\rbrace. 
 $$
 But a direct calculation, gives $$\mathcal{D}\left( \mathcal{A}^*_{\alpha,\beta}\right) \subset \left\lbrace (u,v,\theta)^T \in \mathcal{H},\; -A^{1+\beta-2\alpha}u+v\in \mathcal{D}(A^\alpha),\;  -A^{1-\alpha}u+\theta\in \mathcal{D}(A^\alpha) \right\rbrace. 
 $$
Hence $\mathcal{A}^*_{\alpha,\beta}=\overline{\mathcal{M}}_{\alpha,\beta}$ and $\overline{\mathcal{A}}_{\alpha,\beta}$ generates a $C_0$-semigroup of contractions.

\medskip

Now, concerning the stability of the semigroup $e^{t\mathcal{A}_{\alpha,\beta}}$, recall that Ammar-Khodja et al. \cite{Amk}, are firstly proved that for every $\alpha, \beta \geq 0$,  $e^{t\mathcal{A}_{\alpha,\beta} }$ is exponentially stable if and only if\;
$
\max(1-2\alpha, 2\alpha-1)< \beta < 2\alpha.
$
\;Later on, a stability analysis has been performed by J. Hao and Z. Liu in \cite{liu} for $(\alpha,\beta) \in [0,1] \times [0,1]$. They divided the unit square $[0,1] \times [0,1]$ into four regions $S$, $S_1$, $S_2$, $S_3$ where
\begin{eqnarray*}
S&: =& \left\{(\alpha,\beta) \in [0,1] \times [0,1]; \, \max(1-2\alpha, 2\alpha-1)\leq \beta \leq 2\alpha \right\},\\
S_1 &: =& \left\{(\alpha,\beta) \in [0,1] \times [0,1]; \; 0<\beta-2\alpha, \; \alpha\geq 0, \; \frac{1}{2}\leq \beta \leq 1 \right\},\\
S_2 &: =& \left\{(a,b) \in [0,1] \times [0,1];  \; \beta<1-2\alpha, \; \alpha\geq 0, \; 0\leq \beta \leq \frac{1}{2} \right\}
\end{eqnarray*}
and $S_3$ as defined below. We summarize the main results in the following theorem (see also \cite{liu1}).
\begin{theorem}
The semigroup $e^{t\mathcal{A}_{\alpha,\beta}}$ has the following stability properties:

(i) In $S$, it is exponentially stable;

(ii) In $S_1 $, it is polynomially stable of order $\frac{1}{2(\beta-2\alpha)}$;

(iii) In $S_2$, it is polynomially stable of order $\frac{1}{2-2(\beta+2\alpha)}$;

(iv) In $S_3$, it is not asymptotically stable.
\end{theorem}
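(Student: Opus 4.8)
The plan is to recover parts (i)--(iii) from the analysis of Hao and Liu \cite{liu} (see also \cite{liu1,Amk}), and to prove (iv) by exhibiting a branch of the spectrum of the generator $\overline{\mathcal A}_{\alpha,\beta}$ that runs down to the origin --- it is exactly this ``singularity at zero'' that the rest of the paper then analyses on a finer scale. The common tool is the spectral resolution of $A$: each spectral subspace of $A$ reduces $\overline{\mathcal A}_{\alpha,\beta}$, so the latter is unitarily equivalent to the orthogonal direct sum over $\mu\in\sigma(A)$ of the $3\times3$ matrices
\[
\mathcal A_\mu=\begin{pmatrix}0&1&0\\-\mu&0&\mu^{\alpha}\\0&-\mu^{\alpha}&-\mu^{\beta}\end{pmatrix}
\]
on $\mathbb C^{3}$ carrying the $\mu$--dependent norm $(\mu|a|^{2}+|b|^{2}+|c|^{2})^{1/2}$; hence $\sigma(\overline{\mathcal A}_{\alpha,\beta})=\overline{\bigcup_{\mu\in\sigma(A)}\sigma(\mathcal A_\mu)}$ and, for $\omega\in\R$,
\[
\|(i\omega-\overline{\mathcal A}_{\alpha,\beta})^{-1}\|=\sup_{\mu\in\sigma(A)}\bigl\|\operatorname{diag}(\mu^{1/2},1,1)\,(i\omega-\mathcal A_\mu)^{-1}\,\operatorname{diag}(\mu^{-1/2},1,1)\bigr\| .
\]
The characteristic polynomial of $\mathcal A_\mu$ is $p_\mu(\lambda)=\lambda^{3}+\mu^{\beta}\lambda^{2}+(\mu^{2\alpha}+\mu)\lambda+\mu^{1+\beta}$, so every resolvent bound reduces to a lower estimate for $|p_\mu(i\omega)|$ together with the polynomially controlled size of the adjugate of $i\omega-\mathcal A_\mu$.

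With this in hand, parts (i)--(iii) go as follows. For (i) one checks that on $S$ the supremum above is finite uniformly in $\omega\in\R$ and concludes by the Gearhart--Pr\"uss theorem. For (ii)--(iii) one establishes the high-frequency bounds $\|(i\omega-\overline{\mathcal A}_{\alpha,\beta})^{-1}\|=O(|\omega|^{2(\beta-2\alpha)})$ on $S_1$ and $O(|\omega|^{2-2(\beta+2\alpha)})$ on $S_2$, together with $i\R\subset\rho(\overline{\mathcal A}_{\alpha,\beta})$ --- which holds on $S\cup S_1\cup S_2$, where the inequality $1+\beta-2\alpha\ge0$ prevents $\sigma(\overline{\mathcal A}_{\alpha,\beta})$ from reaching $i\R$, by the same kind of computation as for (iv) below --- and concludes by the Borichev--Tomilov theorem, the stated orders being the reciprocals of those exponents.

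For (iv): since $e^{t\overline{\mathcal A}_{\alpha,\beta}}$ is a contraction semigroup, $\sigma(\overline{\mathcal A}_{\alpha,\beta})\subset\{\operatorname{Re}\lambda\le0\}$; moreover $p_\mu(i\omega)=0$ is impossible, since its real part $\mu^{\beta}(\mu-\omega^{2})$ and imaginary part $\omega(\mu^{2\alpha}+\mu-\omega^{2})$ cannot vanish together (this would force $\omega^{2}=\mu$ and $\omega^{2}=\mu^{2\alpha}+\mu$, hence $\mu^{2\alpha}=0$, or else $\omega=0$ and $\mu^{1+\beta}=0$), so in fact $\sigma(\overline{\mathcal A}_{\alpha,\beta})\subset\{\operatorname{Re}\lambda<0\}$. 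On the other hand, the coefficients of $p_\mu$ are all positive and $p_\mu(0)=\mu^{1+\beta}>0$, so $p_\mu$ has no root in $[0,\infty)$ and has a largest real root $\lambda_0(\mu)<0$. Fixing $\varepsilon>0$,
\[
p_\mu(-\varepsilon)=-\varepsilon^{3}+\mu^{\beta}\varepsilon^{2}-\varepsilon(\mu^{2\alpha}+\mu)+\mu^{1+\beta},
\]
and the defining inequality $\beta<2\alpha-1$ of $S_3$ (with $0\le\beta\le1$) makes $2\alpha$ strictly larger than each of $0,\beta,1,1+\beta$, so the term $-\varepsilon\mu^{2\alpha}$ dominates $p_\mu(-\varepsilon)$ as $\mu\to\infty$; hence $p_\mu(-\varepsilon)<0$ for all large $\mu$, which together with $p_\mu(0)>0$ forces $\lambda_0(\mu)\in(-\varepsilon,0)$ for all large $\mu$. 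Since $\varepsilon>0$ was arbitrary, $\lambda_0(\mu)\to0^-$ as $\mu\to\infty$ along $\sigma(A)$, so $0\in\sigma(\overline{\mathcal A}_{\alpha,\beta})$. Therefore the spectral bound of $\overline{\mathcal A}_{\alpha,\beta}$ equals $0$ without being attained, and as $\|e^{t\overline{\mathcal A}_{\alpha,\beta}}\|\le1$ this yields $\|e^{t\overline{\mathcal A}_{\alpha,\beta}}\|\equiv1$: the semigroup is not asymptotically stable, which is (iv). (When $A$ has compact resolvent, the $\lambda_0(\mu_n)$ are genuine eigenvalues of $\overline{\mathcal A}_{\alpha,\beta}$, and if $\Phi_n$ is a corresponding eigenvector normalized in $\mathcal H$ then $\|e^{t\overline{\mathcal A}_{\alpha,\beta}}\Phi_n\|=e^{\lambda_0(\mu_n)t}\to1$ for each fixed $t$; moreover $i\R\not\subset\rho(\overline{\mathcal A}_{\alpha,\beta})$, so the Borichev--Tomilov route of (ii)--(iii) is unavailable in $S_3$ --- the obstruction that the sequel circumvents.)

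The genuinely delicate point is not (iv), which uses only the sign of $1+\beta-2\alpha$, but the sharp resolvent asymptotics behind (ii)--(iii) and the refined estimate proved here: the dominant balance among the four monomials of $p_\mu(i\omega)$ changes from subregion to subregion, so the real parts of the ``hyperbolic'' roots $\lambda_\pm(\mu)\approx\pm i\mu^{\alpha}$ are governed by different powers of $\mu$ (typically of order $\mu^{\beta}$, $\mu^{2-2\alpha}$ or $\mu^{1+\beta-2\alpha}$, depending on the region) and must be tracked case by case; in $S_3$ there is the further complication that $0\in\sigma(\overline{\mathcal A}_{\alpha,\beta})$, so the classical polynomial-decay theory has to be adapted to this singular low-frequency behaviour.
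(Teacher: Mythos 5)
This theorem is not proved in the paper at all: it is quoted as a summary of the results of Hao and Liu \cite{liu1,liu}, and the only justification the paper records for item (iv) is that $0\in\sigma(\mathcal A_{\alpha,\beta})$. So there is no internal proof to compare against; judging your argument on its own terms: the reduction to the matrices $\mathcal A_\mu$ and the characteristic polynomial $p_\mu(\lambda)=\lambda^{3}+\mu^{\beta}\lambda^{2}+(\mu^{2\alpha}+\mu)\lambda+\mu^{1+\beta}$ are correct, and your root-location argument does show (provided $\sigma(A)$ is unbounded, a hypothesis you use tacitly and must state, since for bounded $A$ the conclusion fails) that $p_\mu$ has a real root $\lambda_0(\mu)\sim-\mu^{1+\beta-2\alpha}\to0^{-}$ in $S_3$, hence $0\in\sigma(\overline{\mathcal A}_{\alpha,\beta})$; this is exactly the fact the paper cites from \cite{liu1}. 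Two smaller caveats: the identity $\sigma(\overline{\mathcal A}_{\alpha,\beta})=\overline{\bigcup_\mu\sigma(\mathcal A_\mu)}$ is in general only the inclusion $\supseteq$ (the $\mathcal A_\mu$ are not normal in the weighted norms, so a point can lie in the spectrum because the weighted resolvent norms are unbounded even while avoiding every $\sigma(\mathcal A_\mu)$ --- harmless for (iv), but it means (i)--(iii) must be run through the resolvent-norm supremum you wrote, not through the spectra); and for (i)--(iii) the actual lower bounds on $|p_\mu(i\omega)|$, which are the entire content, are asserted rather than proved.

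The genuine gap is the final step of (iv). From $0\in\sigma(\overline{\mathcal A}_{\alpha,\beta})$ and contractivity you infer $\|e^{t\overline{\mathcal A}_{\alpha,\beta}}\|\equiv1$ and declare the semigroup not asymptotically stable. But $\|T(t)\|\equiv1$ for a contraction semigroup is perfectly compatible with $T(t)x\to0$ for every $x$ (take $T(t)=\mathrm{diag}(e^{-t/n})$ on $\ell^{2}$, whose generator also has $0$ in its spectrum): what you have actually proved is that the spectral bound is $0$, i.e.\ failure of \emph{exponential} stability, not of \emph{asymptotic} (strong) stability. Your own parenthetical remark already shows each normalized eigenvector orbit tends to $0$; moreover $0$ is not an eigenvalue of $\mathcal A^{*}_{\alpha,\beta}$ (the factorization of $\overline{\mathcal M}_{\alpha,\beta}$ in the introduction identifies the relevant diagonal block with the injective bounded operator $-A^{1+\beta-2\alpha}$ and the boundedly invertible $\mathcal C$), so the range of the generator is dense and the Arendt--Batty--Lyubich--V\~{u} theorem, together with $\sigma(\overline{\mathcal A}_{\alpha,\beta})\cap \mathbf{i}\mathbb R=\{0\}$ from Lemma \ref{2.3}, actually yields strong stability in $S_3$ --- consistent with the paper's main theorem, which is a quantified decay on $Ran(\mathcal A_{\alpha,\beta})$. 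So item (iv) can only be read as ``not uniformly (exponentially) stable'', or as the bare assertion $0\in\sigma(\mathcal A_{\alpha,\beta})$; as a proof of literal failure of asymptotic stability your last step does not close, and with the stated hypotheses it cannot.
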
 

To justify the non asymptotic stability in region $S_3$, they proved that $0\in \sigma(\mathcal{A}_{\alpha,\beta})$, where $\sigma(\mathcal{A}_{\alpha,\beta})$ is the spectrum of $\mathcal{A}_{\alpha,\beta}$ . Moreover, it can be shown that  \; $\sigma(\mathcal{A}_{\alpha,\beta})\cap \mathbf{i}\mathbb{R}=\left\{0\right\}$ \;in the region $S_3$.

\medskip
 
The main result of this paper then concerns the precise asymptotic behaviour of the solutions of \rfb{eq1}-\rfb{eq3} for $(\alpha,\beta)$ in the region $S_3$, with initial condition in a special subspace of $\mathcal{D} (\mathcal{A}_{\alpha,\beta})$. Precisely, we will estimate $\|e^{t\mathcal{A}_{\alpha,\beta}}\mathcal{A}_{\alpha,\beta}\left(I-\mathcal{A}_{\alpha,\beta} \right)^{-1} \| $ when $t\longrightarrow \infty$.
Our technique is a special frequency and spectral analysis of the corresponding operator.
 
\section{Stabilization} \label{specanal}
\setcounter{equation}{0}

We will justify that the resolvent has only a singularity (at zero) on the imaginary axis (Lemma \ref{2.3} below), and that $\left( \mathbf{i}\lambda-\mathcal{A}_{\alpha,\beta}\right) ^{-1}$ is bounded outside a neighborhood of zero in $\mathbb{
R}$ (Lemma \ref{2.4} below). then we apply a result due to Batty, Chill and Tomilov (\cite[Theorem 7.6 ]{BCT16}) which relate the decay of $\|e^{t\mathcal{A}_{\alpha,\beta}}\mathcal{A}_{\alpha,\beta}\left(I-\mathcal{A}_{\alpha,\beta} \right)^{-1} \| $ to the growth of $\left( \mathbf{i}\lambda-\mathcal{A}_{\alpha,\beta}\right) ^{-1}$ near zero.

\medskip

So, first we recall the corresponding result
\begin{theorem}{(\cite{BCT16}, Theorem 7.6.)} \label{2.1}
Let $(T(t))_{t\geq 0}$ be a bounded $C_0$-semigroup on a Hilbert space $X$, with generator $\mathcal{A}$. assume that $\sigma(\mathcal{A})\cap \mathbf{i}\mathbb{R}=\left\{0\right\}$, and let $\gamma \geq 1$. The following are equivalent:

(i)\;\; $\|\left( \mathbf{i}s-\mathcal{A}\right)^{-1} \|=\left\{
\begin{array}{ll}
\ds O\,(|s|^{-\gamma}), \;\;\, s\rightarrow 0, \\
\ds O\,(1), \;\;\;\;\;\;\;\;\, |s|\rightarrow \infty,
\end{array}
\right. $

(ii)\; $\|T(t)A^{\gamma}(I-A)^{-\gamma}\|=O\,\left( \frac{1}{t}\right) , \;\;\;\;t\rightarrow \infty,$

(iii)\, $\|T(t)A(I-A)^{-1}\|=O\,\left( \frac{1}{t^{1/\gamma}}\right) , \;\;\;\;t\rightarrow \infty$.
\end{theorem}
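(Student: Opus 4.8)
\emph{Proof proposal.} The plan is to establish the chain of equivalences along the lines of \cite{BCT16}; the substance lies in $(i)\Leftrightarrow(ii)$, with $(iii)$ coming along with it. Throughout put $B_1:=\mathcal{A}(I-\mathcal{A})^{-1}=(I-\mathcal{A})^{-1}-I$ and $B_\gamma:=\mathcal{A}^\gamma(I-\mathcal{A})^{-\gamma}$, interpreted through the functional calculus of $\mathcal{A}$; since $(T(t))$ is bounded one has $\{\mathrm{Re}\,\lambda>0\}\subset\rho(\mathcal{A})$, so $B_1,B_\gamma$ are bounded, and being functions of $\mathcal{A}$ they commute with every $T(t)$ and with every $(\lambda-\mathcal{A})^{-1}$. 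The weight $z\mapsto z^\gamma(1-z)^{-\gamma}$ vanishes to order $\gamma$ at $z=0$ and tends to $1$ as $|z|\to\infty$ in the half-plane $\overline{\{\mathrm{Re}\,z\le0\}}\supset\sigma(\mathcal{A})$: it is designed to cancel precisely the resolvent growth allowed in $(i)$.

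\emph{Step 1 (the main implication, $(i)\Rightarrow(ii)$).} The first task is a compensation estimate
\begin{equation*}
\sup\Bigl\{\,\bigl\|(\lambda-\mathcal{A})^{-1}\mathcal{A}^\gamma(I-\mathcal{A})^{-\gamma}\bigr\|\ :\ \mathrm{Re}\,\lambda\ge0,\ \lambda\neq0\,\Bigr\}<\infty .
\end{equation*}
Here hypothesis $(i)$ enters: it bounds $(\mathbf{i}s-\mathcal{A})^{-1}$ by $O(|s|^{-\gamma})$ near $0$ and by $O(1)$ at $\infty$; one extends these into $\{\mathrm{Re}\,\lambda>0\}$ by a Neumann-series argument (using also $\|(\lambda-\mathcal{A})^{-1}\|\le M/\mathrm{Re}\,\lambda$), and the factor vanishing to order $\gamma$ at $z=0$ then offsets the singularity. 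Granting this, $\|T(t)B_\gamma\|=O(1/t)$ follows from the quantified Tauberian machinery of \cite{BCT16}: write $T(t)B_\gamma$ as a Bromwich-type contour integral deformed to hug the imaginary axis; the short detour of length $\sim 1/t$ around $0$, on which the integrand is $O(1)$ by the compensation estimate, produces the $1/t$, and the tails are absorbed by the Plancherel/$L^2$-in-$t$ argument of Borichev--Tomilov together with the resolvent decay at $\infty$. The rate is the expected one: a resolvent singularity of order $\gamma$ at $s=0$ is, after the substitution $s\leftrightarrow 1/s$, the analogue of resolvent growth of order $\gamma$ at $s=\infty$, which for one resolvent factor yields $t^{-1/\gamma}$ decay and here, boosted by the $\gamma$-th power, yields $t^{-1}$.

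\emph{Step 2 (the converses and $(ii)\Leftrightarrow(iii)$).} For $(ii)\Rightarrow(i)$ start from $(\lambda-\mathcal{A})^{-1}B_\gamma=\int_0^\infty e^{-\lambda t}T(t)B_\gamma\,dt$, $\mathrm{Re}\,\lambda>0$: the estimate $\|T(t)B_\gamma\|=O(1/t)$ lets one pass to $\lambda=\mathbf{i}s$, $s\neq0$ (with one integration by parts near $t=\infty$), which gives $\sup_s\|(\mathbf{i}s-\mathcal{A})^{-1}B_\gamma\|<\infty$ and, tracking the $s$-dependence, the quantitative behaviour as $s\to0$. One then ``divides out'' the weight: in the functional calculus of $\mathcal{A}$ near the origin the inverse of $z^\gamma(1-z)^{-\gamma}$ is of size $\sim|s|^{-\gamma}$ at the scale $|\mathcal{A}|\sim|s|$, exactly what is needed to pass to $\|(\mathbf{i}s-\mathcal{A})^{-1}\|=O(|s|^{-\gamma})$, while away from the origin the resolvent is already bounded near $s=0$ and the $O(1)$ bound at $\infty$ comes from boundedness of the semigroup together with the decay rate. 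The equivalence $(ii)\Leftrightarrow(iii)$ then holds by transitivity through $(i)$ (it can also be read off as an instance of the rescaling-of-decay results of \cite{BCT16}).

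\emph{Main obstacle.} The crux is the compensation estimate in Step~1 near $\lambda=0$: one must show that the single bounded operator $\mathcal{A}^\gamma(I-\mathcal{A})^{-\gamma}$ cancels the entire $O(|s|^{-\gamma})$ blow-up of $(\mathbf{i}s-\mathcal{A})^{-1}$ uniformly, down to the spectral scale $|\mathcal{A}|\sim|s|$, so that the trivial bound $\|(\mathbf{i}s-\mathcal{A})^{-1}B_\gamma\|\le\|(\mathbf{i}s-\mathcal{A})^{-1}\|\,\|B_\gamma\|$ is useless; and since $0$ need not be an isolated point of $\sigma(\mathcal{A})$, there is in general no Riesz projection to peel it off, so the argument must be carried out globally through the functional calculus and the contour deformation. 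The second delicate point is extracting the sharp, borderline rate $1/t$ (not $t^{-1}\log t$ or worse) from the contour/Plancherel step; this is exactly where the Hilbert-space structure is indispensable and where the reduction of the singularity at $0$ to growth at $\infty$ has to be done without a logarithmic loss.
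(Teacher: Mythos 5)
The first thing to note is that the paper contains no proof of this statement: Theorem \ref{2.1} is quoted verbatim from \cite{BCT16} (their Theorem 7.6) and used as a black box, so there is no in-paper argument to compare yours against. Judged on its own terms, your outline does describe the right architecture --- the weight $z^{\gamma}(1-z)^{-\gamma}$ chosen to cancel the order-$\gamma$ singularity of the resolvent at $0$, a uniform ``compensated'' resolvent bound on the closed right half-plane, a contour/Plancherel argument in the Hilbert space to convert that bound into $O(1/t)$ decay, and a comparison-of-rates step for $(ii)\Leftrightarrow(iii)$. You also correctly identify the genuine difficulties (no Riesz projection at $0$ since $0$ need not be isolated in $\sigma(\mathcal{A})$; the risk of a logarithmic loss at the borderline rate).

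As a proof, however, the proposal is not self-contained, and the two places where it defers are exactly where all the work is. The compensation estimate $\sup\{\|(\lambda-\mathcal{A})^{-1}\mathcal{A}^{\gamma}(I-\mathcal{A})^{-\gamma}\| : \mathrm{Re}\,\lambda\ge 0,\ \lambda\neq 0\}<\infty$ is asserted with only the remark that the weight ``offsets the singularity''; establishing it requires the partial-fraction and functional-calculus identities that constitute the core of \cite{BCT16}, and none of that is supplied. Worse, the passage from this bound to the sharp rate $O(1/t)$ is attributed to ``the quantified Tauberian machinery of \cite{BCT16}'', i.e.\ to the very source whose theorem is being proved, which is circular as written. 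A smaller but real error: in Step 2 you claim the $O(1)$ resolvent bound at infinity ``comes from boundedness of the semigroup''; this is false in general (a bounded semigroup with $\sigma(\mathcal{A})\cap\mathbf{i}\mathbb{R}=\{0\}$ can have resolvent norm blowing up along the imaginary axis at infinity), so in the direction $(ii)\Rightarrow(i)$ this bound must genuinely be extracted from the decay hypothesis, again via the weight-removal identities you have not proved. In short, the proposal is an accurate annotated table of contents for the proof in \cite{BCT16}, not a proof; for the purposes of this paper the honest course is to do what the authors do and cite the result.
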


Hence, our main result is the following

\begin{theorem} \label{main}
We have the following decay in the region $S_3$
\begin{equation} \label{estimpoly}
\|e^{t\mathcal{A}_{\alpha,\beta}}\mathcal{A}_{\alpha,\beta}\left(I-\mathcal{A}_{\alpha,\beta} \right)^{-1} \|=O\,\left(\frac{1}{t}\right), \;\;\;\;t\rightarrow \infty.
\end{equation}
In particular,  the decay of $e^{t\mathcal{A}_{\alpha,\beta} }\mathcal{A}_{\alpha,\beta}x$ to zero is uniform with respect to $x\in\mathcal{D}(\mathcal{A}_{\alpha,\beta})$.

It follows also that, for every $z\in Ran(\mathcal{A}_{\alpha,\beta})$ we have
\begin{equation*}
\|e^{t\mathcal{A}_{\alpha,\beta}}z \| = O \, \left(\frac{1}{t}\right), \;\;\;\;t\rightarrow \infty.
\end{equation*}
\end{theorem}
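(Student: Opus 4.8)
The strategy is dictated by Theorem~\ref{2.1}: by the stated equivalence with $\gamma = 1$, it suffices to establish the resolvent estimate
\begin{equation*}
\|\left(\mathbf{i}s - \mathcal{A}_{\alpha,\beta}\right)^{-1}\| = \left\{
\begin{array}{ll}
O\,(|s|^{-1}), & s \to 0,\\
O\,(1), & |s| \to \infty,
\end{array}
\right.
\end{equation*}
together with the already-noted fact that $\sigma(\mathcal{A}_{\alpha,\beta}) \cap \mathbf{i}\mathbb{R} = \{0\}$ in $S_3$. The bound for $|s| \to \infty$ is precisely the content of Lemma~\ref{2.4}, and the localization of the singularity to $s = 0$ with the sharp rate $|s|^{-1}$ is Lemma~\ref{2.3}; so the body of the proof is a matter of assembling these two lemmas and quoting Theorem~\ref{2.1}(i)$\Rightarrow$(iii). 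Once \eqref{estimpoly} holds, the uniformity of the decay of $e^{t\mathcal{A}_{\alpha,\beta}}\mathcal{A}_{\alpha,\beta}x$ over the unit ball of $\mathcal{D}(\mathcal{A}_{\alpha,\beta})$ follows because $\mathcal{A}_{\alpha,\beta} = \bigl[\mathcal{A}_{\alpha,\beta}(I-\mathcal{A}_{\alpha,\beta})^{-1}\bigr](I - \mathcal{A}_{\alpha,\beta})$, so for $x \in \mathcal{D}(\mathcal{A}_{\alpha,\beta})$ one writes $e^{t\mathcal{A}_{\alpha,\beta}}\mathcal{A}_{\alpha,\beta}x = \bigl[e^{t\mathcal{A}_{\alpha,\beta}}\mathcal{A}_{\alpha,\beta}(I-\mathcal{A}_{\alpha,\beta})^{-1}\bigr](I-\mathcal{A}_{\alpha,\beta})x$ and the bracketed factor has norm $O(1/t)$ uniformly.

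For the last assertion, let $z \in \mathrm{Ran}(\mathcal{A}_{\alpha,\beta})$, say $z = \mathcal{A}_{\alpha,\beta}x$ for some $x \in \mathcal{D}(\mathcal{A}_{\alpha,\beta})$. Setting $y := (I - \mathcal{A}_{\alpha,\beta})x \in \mathcal{H}$, we have $x = (I - \mathcal{A}_{\alpha,\beta})^{-1}y$ and hence $z = \mathcal{A}_{\alpha,\beta}(I-\mathcal{A}_{\alpha,\beta})^{-1}y$, so that $e^{t\mathcal{A}_{\alpha,\beta}}z = e^{t\mathcal{A}_{\alpha,\beta}}\mathcal{A}_{\alpha,\beta}(I-\mathcal{A}_{\alpha,\beta})^{-1}y$, and \eqref{estimpoly} gives $\|e^{t\mathcal{A}_{\alpha,\beta}}z\| \le \|e^{t\mathcal{A}_{\alpha,\beta}}\mathcal{A}_{\alpha,\beta}(I-\mathcal{A}_{\alpha,\beta})^{-1}\|\,\|y\| = O(1/t)$.

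The real work is therefore hidden in Lemmas~\ref{2.3} and~\ref{2.4}, and I expect the main obstacle to be the sharp lower-end estimate near $s = 0$. Because $A$ is diagonalized by its spectral decomposition, the resolvent equation $(\mathbf{i}s - \mathcal{A}_{\alpha,\beta})U = F$ decouples over the eigenspaces of $A$ (or, in the continuous-spectrum case, is handled modewise with spectral measures), reducing to a $3\times 3$ linear system parametrized by the spectral parameter $\mu \in \sigma(A) \subset [\mu_0, \infty)$ with $\mu_0 > 0$. One must bound the inverse of that system uniformly in $\mu$ while tracking the $s$-dependence, and the condition $\beta < 2\alpha - 1$ defining $S_3$ is exactly what forces the worst behavior at small $\mu$ paired with small $s$ — the singularity coming from the zero eigenvalue-direction of $\mathcal{A}_{\alpha,\beta}$. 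The delicate point is showing the blow-up is no worse than $|s|^{-1}$ (not $|s|^{-\gamma}$ for some $\gamma > 1$): one isolates the component of $F$ responsible for the singular contribution, estimates it against $\|F\|$, and checks that the remaining components stay bounded; getting the exponents to balance so that exactly $\gamma = 1$ emerges is where the computation must be done carefully rather than by soft arguments.
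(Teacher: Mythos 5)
Your reduction of the theorem to the resolvent estimates via Theorem \ref{2.1} with $\gamma=1$, and your derivation of the two corollaries by factoring $\mathcal{A}_{\alpha,\beta}x = \bigl[\mathcal{A}_{\alpha,\beta}(I-\mathcal{A}_{\alpha,\beta})^{-1}\bigr](I-\mathcal{A}_{\alpha,\beta})x$, is exactly the paper's architecture and is correct as far as it goes. The problem is that everything after that is a plan rather than a proof: the two resolvent bounds, plus the verification that $\mathbf{i}\mathbb{R}\setminus\{0\}\subset\rho(\mathcal{A}_{\alpha,\beta})$, are precisely where the content of the theorem lives, and you do not establish them. (Also, the $O(|s|^{-1})$ rate near zero is not Lemma \ref{2.3}; that lemma only gives $\sigma(\mathcal{A}_{\alpha,\beta})\cap\mathbf{i}\mathbb{R}=\{0\}$, while the rate is a separate statement, Lemma \ref{2.5} in the paper, and it is the heart of the matter.) The paper proves both the spectral statement and the rate by contradiction arguments on normalized almost-null sequences rather than by your proposed modewise diagonalization: for the rate, one assumes $|w_n|\,\|(\mathbf{i}w_n^{-1}-\mathcal{A}_{\alpha,\beta})U_n\|\to 0$ with $\|U_n\|=1$ and $|w_n|\to\infty$, uses dissipativity to get $|w_n|^{1/2}\|A^{\beta/2}\theta_n\|\to 0$, then applies $|w_n|^{-1}A^{-\alpha}$ to the third equation to get $\|v_n\|\to 0$ (this uses $\beta<\alpha$, valid in $S_3$), and finally combines the three inner-product identities to force $\|A^{1/2}u_n\|\to 0$, contradicting $\|U_n\|=1$.

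One further substantive remark on your sketch: the heuristic that the singularity at $s=0$ comes from ``the worst behavior at small $\mu$ paired with small $s$'' is backwards. Since $A$ is positive definite, $\sigma(A)\subset[\mu_0,\infty)$ with $\mu_0>0$, so there is no small-$\mu$ regime at all; the reason $0\in\sigma(\mathcal{A}_{\alpha,\beta})$ in $S_3$ is that the formal inverse of $\mathcal{A}_{\alpha,\beta}$ involves $A^{2\alpha-1-\beta}$ acting on the first component of the data, and $2\alpha-1-\beta>0$ there, so the inverse is unbounded --- a high-frequency ($\mu\to\infty$) effect. Your modewise $3\times 3$ reduction is legitimate in principle (all entries of the operator matrix are functions of $A$, so the direct-integral decomposition applies even without compact resolvent), but the exponent bookkeeping you defer is exactly the proof, and with the wrong heuristic guiding it you would be looking at the wrong end of the spectrum of $A$.
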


\begin{proof}
In view of Theorem \ref{2.1}, the proof is a direct consequence of the following three lemmas.
\begin{lemma}\label{2.3} In the region $S_3$, the operator $\mathcal{A}_{\alpha,\beta}$ satisfies
$$\sigma(\mathcal{A}_{\alpha,\beta})\cap \mathbf{i}\mathbb{R}=\left\{0\right\}.$$
\end{lemma}
\begin{proof}
It has been proved in \cite{liu}, at the third section that $\left\{0\right\}\subset \sigma(\mathcal{A}_{\alpha,\beta})\cap \mathbf{i}\mathbb{R}$.

Conversely, to show that there is no nonzero spectrum point on the imaginary axis, we use a contradiction argument. In fact, let $\lambda \in \mathbb{R}$, $\lambda \neq 0$ such that $\mathbf{i}\lambda \in \sigma(\mathcal{A}_{\alpha,\beta})$. Then, there exists a sequence $(U_n)\subset \mathcal{D}(\mathcal{A}_{\alpha,\beta})$, with $\|U_n\|=1$ for all $n$, such that
\begin{equation}
\lim_{n\rightarrow \infty}\|\left( \mathbf{i}\lambda I-\mathcal{A}_{\alpha,\beta}\right)U_n\|=0\label{spec}
\end{equation}
or there exists a sequence $(U_n)\subset \mathcal{D}(\mathcal{M}_{\alpha,\beta})= \mathcal{D}(\mathcal{A}_{\alpha,\beta})$, with $\|U_n\|=1$ for all $n$, such that
\begin{equation}
\lim_{n\rightarrow \infty}\|\left( \mathbf{i}\lambda I+\mathcal{M}_{\alpha,\beta}\right)U_n\|=0.\label{spec'}
\end{equation} 

Setting $U_n=(u_n,v_n,\theta_n)$, then (\ref{spec}) is equivalent to
\begin{eqnarray}
\mathbf{i}\lambda A^{1/2}u_n-A^{1/2}v_n=o(1),\;\;\;\text{in}\;H, \label{eq6"}\\
\mathbf{i}\lambda v_n+Au_n-A^{\alpha}\theta_n=o(1),\;\;\;\text{in}\;H,\label{eq7"}\\
\mathbf{i}\lambda \theta_n+A^{\alpha}v_n+A^{\beta}\theta_n=o(1),\;\;\;\text{in}\;H.\label{eq8"""}
\end{eqnarray}
First, since 
$$Re\left(\left\langle \left( \mathbf{i}\lambda I-\mathcal{A}_{\alpha,\beta}\right)U_n, U_n \right\rangle _\mathcal{H} \right)=\|A^{\beta/2}\theta_n\|^2 
$$
we obtain
\begin{equation} \label{eq8""}
\lim_{n\rightarrow \infty}\|A^{\beta/2}\theta_n\|=0,
\end{equation}
and in particular
\begin{equation}
\lim_{n\rightarrow \infty}\|\theta_n\|=0. \label{eq9"}
\end{equation}
Second, taking inner product of (\ref{eq6"}) with $\frac{1}{\lambda} A^{1/2}u_n$, (\ref{eq7"}) with $\frac{1}{\lambda}v_n$ and (\ref{eq8"""}) with $\frac{1}{\lambda}\theta_n$, taking into account (\ref{eq8""}) and (\ref{eq9"}), we  get
\begin{eqnarray}
\mathbf{i}\|A^{1/2}u_n\|^2-\frac{1}{\lambda}\left\langle v_n,Au_n \right\rangle =o(1),\label{eq11"}\\
\mathbf{i}\|v_n\|^2+\frac{1}{\lambda}\left\langle Au_n,v_n \right\rangle - \frac{1}{\lambda}\left\langle A^\alpha \theta_n,v_n \right\rangle =o(1),\label{eq12"}\\
\frac{1}{\lambda}\left\langle A^\alpha v_n,\theta_n \right\rangle =o(1).\label{eq13"}
\end{eqnarray}
Then, by combining (\ref{eq11"})-(\ref{eq13"}) and using that $\|U_n\|=1$,  it yields
\begin{equation}
\|A^{1/2}u_n\|=\frac{1}{2}+o(1),\;\;\;\;\|v_n\|=\frac{1}{2}+o(1). \label{eq14"}
\end{equation}
Now, since in the region $S_3$,  $1-\alpha<\frac{1}{2}$ and $\frac{3}{2}-2\alpha+\beta<\frac{1}{2}$, then using the boundedness of $\|A^{1/2}u_n\|$  we get, by interpolation,
\begin{equation}
\|A^{1-\alpha}u_n\|=O(1),\;\;\;\;\|A^{1-2\alpha+\beta}u_n\|=O(1). \label{2.12}
\end{equation}
Then, replacing $\mathbf{i}v_n$ by $\lambda u_n$ in (\ref{eq7"}), due to (\ref{eq6"}) and interpolation, next 
taking the inner product of the obtained equation with $\frac{1}{\lambda}A^{1-2\alpha+\beta}u_n$ to get
\begin{equation}
-\lambda\|A^{\frac{1}{2}-\alpha+\frac{\beta}{2}}u_n\|^2+\frac{1}{\lambda}\|A^{1-\alpha+\frac{\beta}{2}}u_n\|^2- \frac{1}{\lambda}\left\langle A^{\alpha}\theta_n,A^{1-2\alpha+\beta}u_n \right\rangle=o(1). \label{2.13} 
\end{equation}
Taking the inner product of (\ref{eq8"""}) with $\frac{1}{\lambda}A^{1-\alpha}u_n$, we get
\begin{equation}
\mathbf{i}\left\langle \theta_n, A^{1-\alpha}u_n \right\rangle+\frac{1}{\lambda}\left\langle A^{1/2}v_n,A^{1/2}u_n \right\rangle+\frac{1}{\lambda}\left\langle A^{\alpha}\theta_n,A^{1-2\alpha+\beta}u_n \right\rangle=o(1).\label{2.14}
\end{equation}
By (\ref{eq9"}) and (\ref{2.12}), the first term in (\ref{2.13}) converge to zero. Moreover, using (\ref{eq6"}), we can replace $\frac{1}{\lambda}A^{1/2}v_n$ in the second term in (\ref{2.14}) by  $\mathbf{i} A^{1/2}u_n$. Consequently, the sum of (\ref{2.13}) and (\ref{2.14}) yields
\begin{equation}
-\lambda\|A^{\frac{1}{2}-\alpha+\frac{\beta}{2}}u_n\|^2+\frac{1}{\lambda}\|A^{1-\alpha+\frac{\beta}{2}}u_n\|^2+\mathbf{i}\|A^{\frac{1}{2}}u_n\|^2=o(1).
\end{equation}

Hence, $\|A^{\frac{1}{2}}u_n\|^2=o(1)$, which contradict the first estimate in (\ref{eq14"}).

The same approach applied to (\ref{spec'}) leads to the same conclusion without any difficulty.
\end{proof}

\begin{lemma}\label{2.4}
In the region $S_3$,
$$\|\left( \mathbf{i}s-\mathcal{A}_{\alpha,\beta}\right)^{-1} \|=
 O\,(1), \;\;\, s\rightarrow \infty.$$

\end{lemma}
\begin{proof}
It is a direct consequence of Theorem 2.3 in \cite{liu}, since $\limsup\limits_{\lambda\in\mathbb{R},\,\lambda\rightarrow \infty}|\lambda|^{\frac{\beta}{\alpha}}\|\left( \mathbf{i}\lambda-\mathcal{A}_{\alpha,\beta}\right)^{-1}\|<\infty$.
\end{proof}
\begin{lemma}\label{2.5}
In the region $S_3$,
$$\|\left( \mathbf{i}s-\mathcal{A}_{\alpha,\beta}\right)^{-1} \|= O\,(|s|^{-1}), \;\;\, s\rightarrow 0.$$

\end{lemma}
\begin{proof}
By contradiction, suppose that\; $\limsup\limits_{s\in\mathbb{R},\,s\rightarrow 0}\|s\left( \mathbf{i}s I-\mathcal{A}_{\alpha,\beta}\right)^{-1}\|=\infty$. 

Putting $s=\frac{1}{w}$, this is equivalent to $\limsup\limits_{w\in\mathbb{R},\,|w|\rightarrow \infty}\|w^{-1}\left( \mathbf{i}w^{-1} I-\mathcal{A}_{\alpha,\beta}\right)^{-1}\|=\infty$. Then, there exists a sequence $(w_n)$ of real numbers with $|w_n|\rightarrow\infty$, 
 and a sequence $(U_n)\subset \mathcal{D}(\mathcal{A}_{\alpha,\beta})$, with $\|U_n\|=1$ for all $n$ such that
 \begin{equation}\label{eq5}
 \lim_{n\rightarrow \infty}|w_n|\|\left( \mathbf{i}w_n^{-1} I-\mathcal{A}_{\alpha,\beta}\right)U_n\|=0.
\end{equation}  
Setting $U_n=(u_n,v_n,\theta_n)$, then (\ref{eq5}) is rewretten explicitely as follows
\begin{eqnarray}
\mathbf{i}w_n^{-1}|w_n|A^{1/2}u_n-|w_n|A^{1/2}v_n=o(1),\;\;\;\text{in}\;H, \label{eq6}\\
\mathbf{i}w_n^{-1}|w_n|v_n+|w_n|Au_n-|w_n|A^{\alpha}\theta_n=o(1),\;\;\;\text{in}\;H,\label{eq7}\\
\mathbf{i}w_n^{-1}|w_n|\theta_n+|w_n|A^{\alpha}v_n+|w_n|A^{\beta}\theta_n=o(1),\;\;\;\text{in}\;H.\label{eq8}
\end{eqnarray}
Since 
$Re\left(\left\langle |w_n|\left( \mathbf{i}w_n^{-1} I-\mathcal{A}_{\alpha,\beta}\right)U_n, U_n \right\rangle _\mathcal{H} \right)=-|w_n|\|A^{\beta/2}\theta_n\|^2 $, it yields
\begin{equation} \label{eq8"}
\lim_{n\rightarrow \infty}|w_n|^{1/2}\|A^{\beta/2}\theta_n\|=0,
\end{equation}
and in particular
\begin{equation}
\lim_{n\rightarrow \infty}\|\theta_n\|=0. \label{eq9}
\end{equation}
Then, applying $|w_n|^{-1}A^{-\alpha}$ to (\ref{eq8}), we get, (by taking into account (\ref{eq9})),
\begin{equation} \label{eq10}
\lim_{n\rightarrow \infty}\|v_n\|=0.
\end{equation}
Now, taking inner product of (\ref{eq6}) with $A^{1/2}u_n$, (\ref{eq7}) with $v_n$ and (\ref{eq8}) with $\theta_n$, taking into account (\ref{eq8"}),  (\ref{eq9}) and (\ref{eq10}), we  have
\begin{eqnarray}
\mathbf{i}w_n^{-1}|w_n|\|A^{1/2}u_n\|^2-|w_n|\left\langle v_n,Au_n \right\rangle =o(1),\label{eq11}\\
|w_n|\left\langle Au_n,v_n \right\rangle - |w_n|\left\langle A^\alpha \theta_n,v_n \right\rangle =o(1),\label{eq12}\\
|w_n|\left\langle A^\alpha \theta_n,v_n \right\rangle =o(1).\label{eq13}
\end{eqnarray}
By combining (\ref{eq11})-(\ref{eq13}), it yields
\begin{equation}
\lim_{n\rightarrow \infty}\|A^{1/2}u_n\|=0. \label{eq14}
\end{equation}
The promised contradiction follows from (\ref{eq9}), (\ref{eq10}) and (\ref{eq14}). Thus, the proof of Lemma \ref{2.5} is completed.
\end{proof}
Which ends the proof of theorem.
\end{proof}
\begin{remark}
In $S_3$, the semigroup $e^{t\mathcal{A}_{\alpha,\beta}}$ is of Gevrey class $\delta >\frac{\alpha}{\beta}$ and in particular, it is infinitely differentiable \cite{liu}. Thus, for $z \in Ran(\mathcal{A}_{\alpha,\beta})$, we not only has polynomial stability, but also have instantaneous smoothness.
\end{remark}

\section{Application}

As application of Theorem \ref{main}, we consider the following thermoplate system:

\begin{equation*}
\left\{
\begin{array}{lll}
\ds u_{tt} + \Delta^2 u - (-\Delta)^\alpha w = 0, \Omega \times (0,+\infty),\\
w_t + (- \Delta)^\alpha u_t - \Delta w = 0, \, \Omega \times (0,+\infty),\\
u = \Delta u = w = 0, \, \Gamma \times (0,+\infty),\\
u(x,0) = u_0(x), u_t(x,0) = u_1(x), w(x,0) = w_0(x), \, \Omega,
\end{array}
\right.
\end{equation*} 

where $\Omega$ be a bounded domain in $\mathbb{R}^n$ with smooth boundary $\Gamma$, $\beta = \frac{1}{2}$ and  $3/4 < \alpha \leq 1.$

\medskip

Here, $H = L^2(\Omega), A = \Delta^2$ avec $\mathcal{D}(A) = \left\{u \in H^2(\Omega) \cap H^1_0(\Omega); \, \Delta u = 0\;\text{on}\; \Gamma\right\}$.

\medskip

Then, according to Theorem \ref{main} the corresponding semigroup satisfies the estimate (\ref{estimpoly}) for all $\alpha \in (3/4,1].$

\end{document}